\title{Delete Nim}
\author{\large{Koki Suetsugu} \\National Institute of Informatics \\suetsugu.koki@gmail.com \and \large{Tomoaki Abuku} \\University of Tsukuba \\buku3416@gmail.com}
\date{}
\newtheorem{theorem}{Theorem}
\newtheorem{definition}{Definition}
\newtheorem{example}{Example}
\begin{document}
\maketitle

\section{Introduction}

Combinatorial game theory is the mathematical study for strategy of perfect information games in which there are neither chance moves, nor hidden information.
Among early results of combinatorial game theory there is a winning strategy for {\em Nim} by Bouton \cite{Bou01} in 1902. 
Nim is a two-player game with some heaps of stones in which the player to move chooses one of the heaps and takes away arbitrary numbers of stones from it. The upper part of Fig. \ref{nim32} shows a game position of Nim. In the position, there are two heaps of stones: one has three stones and the other has two stones. Since each player can remove any number of stones from a single heap, the candidates for the next positions are shown in the lower part of the figure. The winner of Nim is the player who takes away the last stone. We express Nim positions by the number of stones in each heap like $(3,2)$. 

\begin{figure}[htb]
\centering
\includegraphics[scale = 0.2]{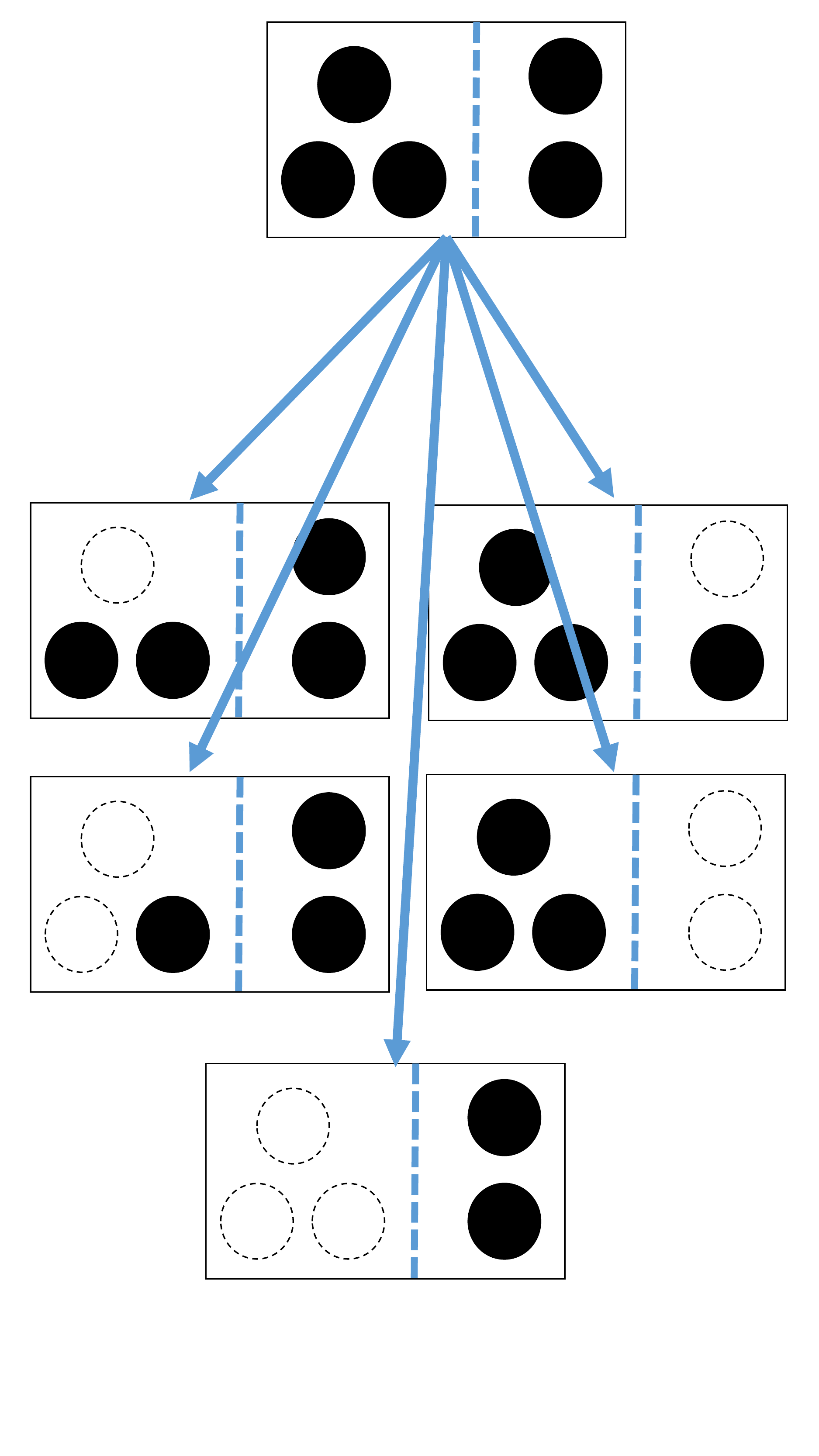}
\caption{The Nim position $(3,2)$}
\label{nim32}

\end{figure}

We say a game is in {\em normal play} if we define that the winner of the game is the player who moves last (like Nim). A game is called {\em impartial} if both players have the same set of options (like Nim). In this paper, we study only impartial games in normal play and assume every play ends in a finite number of moves no matter how the players move.

We say a player has a {\em winning strategy} if she can win regardless of her opponent's move. In an impartial game, we say that a game position is an {\em $\mathcal{N}$-position} or a {\em $\mathcal{P}$-position} if the next player (i.e. the current player) or the previous player (i.e. the other player) has a winning strategy, respectively. 
Clearly, a game position of an impartial two-player game is an $\mathcal{N}$-position or a $\mathcal{P}$-position.



We can analyze whether a Nim position is an $\mathcal{N}$-position or a $\mathcal{P}$-position in a simple way.

\begin{theorem}[Bouton \cite{Bou01}]
\label{Bouton}

A Nim position $(n_{1},n_{2},\ldots,n_{k})$ is a $\mathcal{P}$-position if and only if $n_{1} \oplus n_{2} \oplus \cdots \oplus n_{k} =0$, where operation $\oplus$ (called Nim sum) means the so-called exclusive OR operation in binary expression. 

\end{theorem}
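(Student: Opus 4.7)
The plan is to partition all Nim positions into two sets according to their Nim sum and show that these sets coincide with the $\mathcal{P}$- and $\mathcal{N}$-positions, respectively. Set $\mathcal{A} = \{(n_1,\ldots,n_k) : n_1 \oplus \cdots \oplus n_k = 0\}$ and let $\mathcal{B}$ be its complement. By the standard backward-induction characterization of $\mathcal{P}$- and $\mathcal{N}$-positions in a finite impartial game under normal play, it suffices to verify three facts: (i) the terminal position $(0,\ldots,0)$ lies in $\mathcal{A}$; (ii) every move from a non-terminal position in $\mathcal{A}$ leads to a position in $\mathcal{B}$; (iii) from every position in $\mathcal{B}$ there is a legal move into $\mathcal{A}$.

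Fact (i) is immediate. For (ii), I would note that if $n_1 \oplus \cdots \oplus n_k = 0$ and a move replaces some $n_i$ by $n_i' < n_i$, then the new Nim sum equals $n_i \oplus n_i'$, which is nonzero because $n_i \neq n_i'$; so the resulting position lies in $\mathcal{B}$. This step is essentially a one-line cancellation argument.

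Step (iii) is where the real content of the theorem lies, and I expect this to be the main obstacle. Given $s := n_1 \oplus \cdots \oplus n_k \neq 0$, let $d$ be the position of the highest set bit of $s$. Since that bit of $s$ is $1$, an odd (in particular nonzero) number of the $n_i$ have a $1$ in bit $d$; choose any such $n_i$ and set $n_i' := n_i \oplus s$. The crucial claim is that $n_i' < n_i$: all bits above $d$ in $n_i$ are unchanged, while bit $d$ flips from $1$ to $0$, so the decrease of $2^d$ from that bit dominates any changes in the lower-order bits. Hence $n_i \mapsto n_i'$ is a legal Nim move, and the new Nim sum is $s \oplus n_i \oplus n_i' = s \oplus s = 0$, placing the resulting position in $\mathcal{A}$.

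Once (i)--(iii) are established, a straightforward induction on $n_1 + \cdots + n_k$ (every non-terminal $\mathcal{A}$-position only moves to $\mathcal{B}$-positions, and every $\mathcal{B}$-position has a move into $\mathcal{A}$) identifies $\mathcal{A}$ with the $\mathcal{P}$-positions, yielding the theorem. The only nontrivial ingredient in the whole argument is the highest-bit trick used to construct the winning move in step (iii).
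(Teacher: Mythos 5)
Your proof is correct: the three facts (terminal position has Nim sum zero, every move from a zero-sum position breaks the sum, and the highest-set-bit construction produces a move from any nonzero-sum position back to sum zero) are exactly what is needed, and each step as you argue it is sound. The paper itself gives no proof of this theorem --- it is quoted as Bouton's classical result with a citation --- so there is nothing to compare against; your argument is the standard one from the literature and can stand as a complete proof of the statement.
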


\begin{example}
$2 \oplus 5 \oplus 7 = 10_2 \oplus 101_2 \oplus 111_2 = 0$, therefore, Nim position $(2,5,7)$ is a $\mathcal{P}$-position.
\end{example}

\begin{example}
$4 \oplus 5 \oplus 6 = 100_2 \oplus 101_2 \oplus 110_2 = 111_2 = 7$, therefore, Nim position $(4,5,6)$ is an $\mathcal{N}$-position.
\end{example}

\subsection{Impartial game and $\mathcal{G}$-value}

Sprague \cite{Spr35} and Grundy \cite{Gru39} extended Bouton's theorem for general impartial games in normal play.

\begin{definition}
Let $\mathbb{N}$ be the set of all non-negative integers. For any proper subset $S$ of $\mathbb{N}$, we define minimal excluded function {\em ${\rm mex}(S)$} as follows:
$$
{\rm mex}(S)={\rm min}(\mathbb{N} \setminus S).
$$
\end{definition}
\begin{definition}
For any game position $g$, we define $\mathcal{G}$-value function $\mathcal{G}(g)$ as follows:

$$
\mathcal{G}(g) = {\rm mex}(\{\mathcal{G}(g') \mid g \rightarrow g' \}),
$$
where $g \rightarrow g'$ means that $g'$ is an option of $g$.
\end{definition}

\begin{theorem}[Sprague \cite{Spr35} and Grundy \cite{Gru39}]

For any game position $g$, $g$ is a $\mathcal{P}$-position if and only if $\mathcal{G}(g)=0$.

\end{theorem}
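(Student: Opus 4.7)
The plan is to prove both directions simultaneously by well-founded induction on the game tree. Since the hypothesis guarantees that every play ends in a finite number of moves, the relation ``is an option of'' is well-founded on the positions reachable from $g$, so induction is legitimate: I may assume the equivalence ``$\mathcal{P}$-position $\iff \mathcal{G}$-value $0$'' for every option $g'$ of $g$ and deduce it for $g$ itself.

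For the base case, I would consider a terminal position $g$ (one with no options). Such a $g$ is trivially a $\mathcal{P}$-position, because the player to move loses at once under the normal play convention, meaning the previous player has won. On the other hand, $\mathcal{G}(g) = \mathrm{mex}(\emptyset) = \min(\mathbb{N}) = 0$, so the equivalence holds at the base.

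For the inductive step, fix a non-terminal $g$ and split into two cases based on $\mathcal{G}(g)$. If $\mathcal{G}(g) = 0$, then by definition of mex, no option $g'$ satisfies $\mathcal{G}(g') = 0$; by the inductive hypothesis, every option is an $\mathcal{N}$-position, so whichever move the current player makes, the opponent inherits a winning strategy, proving $g$ is a $\mathcal{P}$-position. If instead $\mathcal{G}(g) = n > 0$, then by the minimality in the mex, the value $0$ must appear among $\{\mathcal{G}(g') \mid g\to g'\}$; pick an option $g^*$ with $\mathcal{G}(g^*) = 0$. By the inductive hypothesis $g^*$ is a $\mathcal{P}$-position, so the current player's move $g\to g^*$ hands the opponent a losing position, and thus $g$ is an $\mathcal{N}$-position.

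I do not expect a serious obstacle: the only subtlety is to justify the well-founded induction on the game tree, which is immediate from the standing finiteness-of-play assumption stated in the introduction. Once that is in place, both implications of the biconditional fall out of the mex definition by the case split above, and indeed the contrapositive of each case yields the other direction automatically, so the argument is symmetric and self-contained.
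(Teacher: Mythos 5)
Your proof is correct and is the standard argument for the Sprague--Grundy theorem. The paper itself gives no proof of this statement (it is cited as a classical result of Sprague and Grundy), so there is nothing to compare against; your well-founded induction on the game tree, with the base case $\mathrm{mex}(\emptyset)=0$ for terminal positions and the two-case split on whether $\mathcal{G}(g)$ is zero, is exactly the canonical proof, and your observation that the two implications together with the exhaustive and exclusive partition into $\mathcal{N}$- and $\mathcal{P}$-positions yield the biconditional is sound.
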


\begin{definition}

For any game positions $g$ and $h$, we define the {\em disjoint sum} $g+h$ recursively as the game whose options are $g+h'$ or $g'+h$ where $g'$ ranges all options of $g$ and $h'$ ranges all options of $h$.

\end{definition}

\begin{theorem}[Sprague \cite{Spr35} and Grundy \cite{Gru39}]

For  any game positions  $g$ and $h$, 

$$
\mathcal{G}(g+h)=\mathcal{G}(g) \oplus \mathcal{G}(h).
$$

\end{theorem}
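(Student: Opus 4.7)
My plan is to prove the identity by induction on the total length of the longest play from the position $g+h$, which is a well-founded measure since every play terminates in finitely many moves. Set $a = \mathcal{G}(g)$, $b = \mathcal{G}(h)$, and $c = a \oplus b$. By the definition of $\mathcal{G}$ via the mex operation, showing $\mathcal{G}(g+h) = c$ reduces to verifying two things: (i) no option of $g+h$ has $\mathcal{G}$-value equal to $c$, and (ii) for every non-negative integer $d < c$, some option of $g+h$ has $\mathcal{G}$-value equal to $d$. The options of $g+h$ are exactly $g'+h$ for $g \to g'$ and $g+h'$ for $h \to h'$, and to each such option the induction hypothesis applies and yields $\mathcal{G}(g'+h) = \mathcal{G}(g') \oplus b$ or $\mathcal{G}(g+h') = a \oplus \mathcal{G}(h')$.

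For (i), suppose some option $g'+h$ had $\mathcal{G}$-value $c$. The induction hypothesis gives $\mathcal{G}(g') \oplus b = a \oplus b$, so $\mathcal{G}(g') = a = \mathcal{G}(g)$. But by definition of $\mathcal{G}(g)$ as a mex, the value $a$ is excluded from $\{\mathcal{G}(g') : g \to g'\}$, a contradiction. The symmetric argument rules out options of the form $g+h'$. This part is essentially bookkeeping once the induction hypothesis is in place.

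The main work is (ii), and the key lemma about the Nim sum that I plan to use is: if $d < c$, then the most significant bit in which $d$ and $c$ differ is a bit where $c$ has a $1$ and $d$ has a $0$. Since $c = a \oplus b$, that bit belongs to exactly one of $a, b$; say to $a$ without loss of generality. Then XORing $a$ with the number $d \oplus c$ flips that top bit from $1$ to $0$ and does not touch any higher bit, so $a \oplus d \oplus c < a$. By the mex property of $\mathcal{G}(g) = a$, every value strictly less than $a$ is achieved as $\mathcal{G}(g')$ for some option $g'$ of $g$; in particular there is an option $g'$ with $\mathcal{G}(g') = a \oplus d \oplus c = d \oplus b$. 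The induction hypothesis then gives $\mathcal{G}(g'+h) = (d \oplus b) \oplus b = d$, exhibiting the required option. The obstacle, and the step that has to be handled with care, is precisely this ``bit-chasing'' inequality $a \oplus d \oplus c < a$; everything else in the argument is structural induction plus the mex definition.
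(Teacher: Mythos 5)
Your argument is correct: it is the classical Sprague--Grundy proof. The paper itself does not prove this theorem at all --- it is stated as a known result and attributed to Sprague and Grundy, with the proof deferred to the cited references --- so there is no in-paper argument to compare against. Both halves of your mex verification are sound: part (i) is the straightforward cancellation $\mathcal{G}(g') \oplus b = a \oplus b \Rightarrow \mathcal{G}(g') = a$, contradicting the mex definition of $a$, and in part (ii) the key inequality $a \oplus d \oplus c < a$ holds exactly as you argue, since $d \oplus c$ has its most significant set bit at the highest position where $d$ and $c$ disagree, that bit is a $1$ of $c$ (because $d < c$), and it lies in exactly one of $a$, $b$. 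One minor point of hygiene: rather than inducting on ``the length of the longest play,'' which requires the game tree to have finite depth, it is cleaner to use well-founded induction directly on positions (justified by the paper's standing assumption that every play terminates); this changes nothing in the substance of your argument.
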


Therefore, people have been interested in $\mathcal{G}$-values of games and some of the early results show us various structures of $\mathcal{G}$-values in some specific games, for example, Welter's game \cite{Wel54}, cyclic Nimhoff \cite{FL91}, and Lim \cite{FFS14}.







\section{Delete Nim}

Here we define an impartial game called {\em Delete Nim}.
In this game, there are two heaps of stones.
The player chooses one of the heaps and delete the other heap.
Next, she takes away $1$ stone from the chosen heap and optionally splits it into two heaps.

Fig. \ref{deletenim} shows a play of Delete Nim.
\begin{figure}[htb]
\centering
\includegraphics[scale = 0.2]{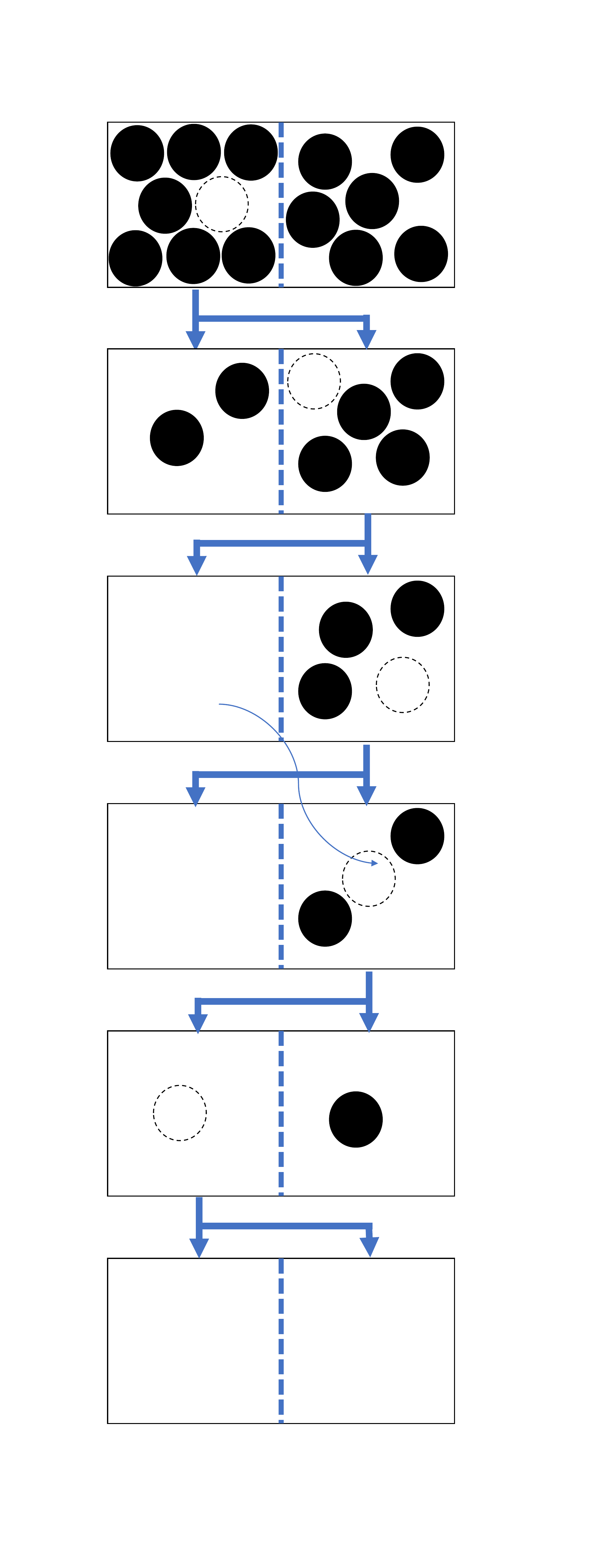}
\caption{one game of Delete Nim}
\label{deletenim}

\end{figure}

In this paper, we show how to compute the $\mathcal{G}$-value of Delete Nim.

To compute the $\mathcal{G}$-value of a position of the game, we need to prepare some definitions and notations.

\begin{definition}
We denote the usual OR  operation of two numbers in binary notations by $\vee$.  
\end{definition}

\begin{example}
$3 \vee 5 = 11_2 \vee 101_2 = 111_2 = 7$.
\end{example}

\begin{example}
$9 \vee 12 = 1001_2 \vee 1100_2 = 1101_2 = 13$. 
\end{example}

Now we can compute the $\mathcal{G}$-value of a position of Delete Nim.

\begin{theorem}
We denote the position of Delete Nim with two heaps of $x$ stones and $y$ stones by $(x,y)$. Then,
\begin{eqnarray}
\mathcal{G}((x,y))&=&v_2((x \vee y)+1) \nonumber,
\end{eqnarray}
where $v_p(n)$ is the p-adic valuation of $n$, that is,
\[v_p(n)=\left \{ \begin{array}{cc}
    {\rm max}\{v \in \mathbb{N} : p^v \mid n\}  & (n\neq 0) \\ 
   \infty & (n=0).
  \end{array}\right.
\]
\end{theorem}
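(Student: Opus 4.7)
The plan is to prove the formula by strong induction on $x + y$. The base case $(x, y) = (0, 0)$ has no legal moves, so $\mathcal{G}((0, 0)) = 0$, matching $v_2((0 \vee 0) + 1) = v_2(1) = 0$. For the inductive step, the options of $(x, y)$ are exactly the positions $(a, b)$ with $a, b \in \mathbb{N}$ and $a + b \in \{x - 1\} \cup \{y - 1\}$ (dropping $x - 1$ when $x = 0$ and similarly for $y$). By the inductive hypothesis, each option has $\mathcal{G}$-value $v_2((a \vee b) + 1)$, so the computation of $\mathcal{G}((x, y))$ reduces to determining ${\rm mex}(S_{x-1} \cup S_{y-1})$, where
\[
S_n = \{\, v_2((a \vee b) + 1) : a, b \in \mathbb{N},\ a + b = n \,\},
\]
with the convention $S_{-1} = \emptyset$.

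The heart of the argument is the following combinatorial lemma: $S_n$ equals the set of indices $i$ such that bit $i$ of $n + 1$ is $1$. Granting this, $S_{x-1} \cup S_{y-1}$ is the set of $1$-bit positions of $x \vee y$, and its ${\rm mex}$ is the position of the lowest $0$-bit of $x \vee y$, which is precisely the number of trailing $1$s of $x \vee y$, i.e. $v_2((x \vee y) + 1)$. This closes the induction.

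To prove the lemma I would use the identity $a + b = (a \vee b) + (a \wedge b)$ throughout. For the inclusion $\supseteq$, given $i$ with bit $i$ of $n + 1$ equal to $1$, I propose the explicit witness $a = 2^i - 1$ and $b = n + 1 - 2^i$ (note $b \geq 0$ since bit $i$ of $n + 1$ is set); a direct bit-by-bit check shows $a \vee b$ has bits $0, 1, \dots, i - 1$ equal to $1$ and bit $i$ equal to $0$, so $v_2((a \vee b) + 1) = i$. For the converse $\subseteq$, given $v_2((a \vee b) + 1) = i$, set $c = a \vee b$ and $d = a \wedge b$, so $c + d = n$; then bits $0, \dots, i - 1$ of $c$ are $1$ and bit $i$ of $c$ is $0$, and since $d$ is a submask of $c$, bit $i$ of $d$ is also $0$. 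Writing $n + 1 = (c + 1) + d$ and observing that $c + 1$ has its bottom $i$ bits equal to $0$ and bit $i$ equal to $1$, no carry from the lower bits reaches position $i$ when $d$ is added, so bit $i$ of $n + 1$ equals $1$.

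The main obstacle is the combinatorial lemma about $S_n$; the rest is essentially bookkeeping from the ${\rm mex}$ recursion. The two technical insights that make the lemma fall out cleanly are the identity $a + b = (a \vee b) + (a \wedge b)$, which lets one replace the constraint $a + b = n$ with a submask condition on $a \wedge b$ inside $a \vee b$, and the explicit witness $a = 2^i - 1$, which is tailored to produce exactly $i$ trailing $1$s in $a \vee b$.
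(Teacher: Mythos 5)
Your proof is correct and follows essentially the same strategy as the paper's: your explicit witness $a = 2^i - 1$, $b = n + 1 - 2^i$ is exactly the paper's split $x' = x - 2^{h'}$, $y' = 2^{h'} - 1$, and your carry argument via $a + b = (a \vee b) + (a \wedge b)$ plays the same role as the paper's bound on $(x' + y') \bmod 2^{h+1}$. The only real difference is one of packaging: you establish the slightly stronger lemma characterizing $S_n$ exactly as the set of $1$-bits of $n+1$, whereas the paper verifies only the two facts needed for the ${\rm mex}$ (the value $h$ is unreachable and every $h' < h$ is reachable).
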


\begin{proof}

Let $x= \sum_{i} 2^{i}x_{i}, y= \sum_{i} 2^{i}y_{i} (x_i,y_i \in \{0, 1\})$ and $h=v_2((x \vee y)+1)$.

First, we show that $(x, y)$ has no next position $(x', y')$ such that $h = v_2((x' \vee y')+1)$ by contradiction. Note that $x' + y' = x-1$ or $x' + y' = y-1$.

If $h=0$, then $x$ and $y$ are even. Therefore, $x' + y'$ is an odd number and $v_2((x' \vee y')+1) \neq 0$, which is a contradiction.

Let $x'= \sum_{i} 2^{i}x'_{i}, y'= \sum_{i} 2^{i}y'_{i} (x'_i,y'_i \in \{0, 1\})$.
If $h > 0$, then $x'_{h} = y'_{h} = 0$ and for any $k < h$, $x'_{k} = 1$ or $y'_{k} = 1$. Therefore, $2^{h}-1\leq ((x'+y') \bmod 2^{h+1})\leq 2^{h+1}-2$ and thus $2^{h}\leq ((x'+y'+1) \bmod 2^{h+1})\leq 2^{h+1}-1$. Then $x_{h} = 1$ or $y_{h} = 1$, which is a contradiction.

Next, we show that for any $h' < h$, $(x, y)$ has a next position such that $h'=v_2((x' \vee y')+1)$.
Since $h=v_2((x \vee y)+1)$, without loss of generality, $x_{h'} = 1$.
Let $x'=x- 2^{h'}$ and $y'=2^{h'}-1$. Clearly, $x'_{h'}=0, y'_{h'}=0, y'_{k} = 1(k<h')$ and $x' + y' = x- 1$. Therefore, $(x', y')$ is a next position of $(x, y)$ and $h'=v_2((x' \vee y')+1)$.

\end{proof}

A game similar to Delete Nim is introduced in \cite{ZT08}. 
The rule is as follows:
There are two heaps of stones. The player chooses one of the heaps and delete it, and she splits the other heap into two heaps.
We call this game a Variant of Delete Nim or VDN.
\begin{theorem}
There is an isomorphism $F(g)$ from the set of all positions of VDN to that of Delete Nim:

$$
F((x,y))=(x-1, y-1).
$$
\end{theorem}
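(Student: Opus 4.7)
The plan is to verify that $F$ is a game isomorphism, i.e., a bijection between the position sets that sends options to options bijectively. Once this is established, a routine induction on game length yields $\mathcal{G}_{\mathrm{VDN}}(g) = \mathcal{G}(F(g))$ for every position $g$, which is what the word ``isomorphism'' conveys here; combined with the previous theorem, this immediately gives a closed formula for the $\mathcal{G}$-values of VDN.

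The first step is to spell out the options in each game. In VDN, a move from $(x, y)$ deletes one heap and splits the other into two non-empty heaps; so, if we delete heap $y$, the options are exactly the pairs $(a, b)$ with $a + b = x$ and $a, b \geq 1$, and the options from deleting heap $x$ are symmetric. In Delete Nim, a move from $(u, v)$ deletes one heap, removes one stone from the other, and optionally splits it; so, if we keep heap $u$ and delete heap $v$, the resulting position is some $(a', b')$ with $a' + b' = u - 1$ and $a', b' \geq 0$ (the ``no split'' case corresponds to one of $a', b'$ being zero).

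The second step is to match these two sets of options under $F$. The map $F : (x, y) \mapsto (x - 1, y - 1)$ is a bijection from the VDN position set $\{(x, y) : x, y \geq 1\}$ onto the Delete Nim position set $\{(u, v) : u, v \geq 0\}$, with inverse $(u, v) \mapsto (u + 1, v + 1)$. The substitution $a' = a - 1$, $b' = b - 1$ then sets up a bijection between the VDN options $\{(a, b) : a + b = x,\ a, b \geq 1\}$ of $(x, y)$ and the Delete Nim options $\{(a', b') : a' + b' = (x - 1) - 1,\ a', b' \geq 0\}$ of $F((x, y)) = (x - 1, y - 1)$; the symmetric case handles moves acting on the other heap.

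The only subtlety is the boundary behaviour: a VDN split requires both resulting heaps to be non-empty, whereas a Delete Nim split is optional and may leave an empty part. The $-1$ shift in $F$ exactly compensates for this asymmetry, and in particular sends the VDN terminal position $(1, 1)$ (from which no split of a heap of size $1$ into two non-empty heaps exists) to the Delete Nim terminal $(0, 0)$. I do not expect any serious obstacle beyond keeping these boundary conditions aligned; everything else reduces to the trivial algebraic identity $(a - 1) + (b - 1) = (x - 1) - 1$.
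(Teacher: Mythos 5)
Your proof is correct and follows essentially the same route as the paper's: both establish the option-to-option correspondence via the identity $(a-1)+(b-1)=(x-1)-1 \iff a+b=x$ and check that the terminal positions match. Your version is somewhat more careful than the paper's about the boundary constraints ($a,b\geq 1$ in VDN versus $a',b'\geq 0$ in Delete Nim, with the no-split move corresponding to an empty part), but this is an elaboration rather than a different argument.
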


\begin{proof}
Cleary, the end positions of the games hold this isomorphism.
Let $(x' - 1, y' - 1)$ be a next position of $(x - 1, y - 1)$ in Delete Nim. Then without loss of generality, $(x' - 1) + (y' - 1) = (x - 1) - 1$. On the other hand, $(x, y)$ of VDN has the next position $(x', y')$ because $x' + y' = x$. 
For the other side, let $(x', y')$ be a next position of $(x, y)$ in VDN. Then without loss of generality, $x' + y' = x$. On the other hand, $(x-1, y-1)$ of Delete Nim has the next position $(x' - 1, y' - 1)$ because $(x' -1) + (y' - 1)= (x-1) -1$.
Therefore, there is a one-to-one corresponding between $(x - 1, y-1)$ of Delete Nim and $(x, y)$ of VDN.
\end{proof}

In \cite{ZT08}, the $\mathcal{P}$-positions of the game are shown but the $\mathcal{G}$-values of positions of  the game are not discussed. By the isomorphism, now we can compute the $\mathcal{G}$-value of position $(x,y)$ of the game as $v_2((x-1) \vee(y-1)+1)$.

\section{Conclusion}
In this paper, we introduced a game for which one needs to use the OR operation and  $2$-adic  valuation $v_2(n)$ in order to compute the $\mathcal{G}$-value of a position. 
To the best of our knowledge, this is the only case we need the OR operation in order to calculate the $\mathcal{G}$-values.
Therefore, we think that our results would contribute to the field of combinatorial game theory. 

\section{Aknowledgements}
The authors would like to thank Dr. K\^{o} Sakai for valuable discussions and comments. This work was supported by JST CREST Grant Number JPMJCR1401 including AIP challenge program, Japan.
\bibliography{bibsample}
\bibliographystyle{plain}

\end{document}